\title{\sc On Maxwell's and Poincar\'e's Constants}
\def\shorttitle{On Maxwell's and Poincar\'e's Constants}
\def\pauthor{Dirk Pauly}
\def\mylabelonoff{off}
\def\allowdisbrk{no}
\author{{\sf\pauthor}}
\markboth{\pauthor}{\shorttitle}
\numberwithin{equation}{section}
\newenvironment{acknow}{{\vspace*{1cm}\noindent\bf Acknowledgements }}{}
\newcommand{\bewboxw}{\mbox{}\hfill $\square$ \\}
\newenvironment{proof}{{\noindent\bf Proof }}{\bewboxw}
\newcommand{\keywords}[1]{{\noindent\bf Key Words }#1}
\newcommand{\mylabel}[1]{\label{#1}\fbox{{\rm #1}}}}{\newcommand{\mylabel}[1]{\label{#1}\makebox[0mm][]{}}}
\newcommand{\Abb}[5]{\begin{array}{ccccc}#1&:&#2&\longrightarrow&#3\\{}&{}&#4&\longmapsto&#5\end{array}}
\newcommand{\ds}{\displaystyle}
\newcommand{\ol}{\overline}
\newcommand{\ul}{\underline}
\newcommand{\nz}{\mathbb{N}}
\newcommand{\rz}{\mathbb{R}}
\newcommand{\rt}{\rz^3}
\newcommand{\rttt}{\rz^{3\times3}}
\newcommand{\rN}{\rz^N}
\DeclareMathOperator{\p}{\partial}
\newcommand{\na}{\nabla}
\DeclareMathOperator{\grad}{grad}
\DeclareMathOperator{\rot}{rot}
\renewcommand{\div}{\operatorname{div}}
\newcommand{\eps}{\varepsilon}
\newcommand{\epsmo}{\eps^{-1}}
\newcommand{\epsu}{\ul{\eps}}
\newcommand{\epso}{\ol{\eps}}
\newcommand{\epsh}{\hat{\eps}}
\newcommand{\om}{\Omega}
\newcommand{\pom}{\p\!\om}
\newcommand{\ga}{\Gamma}
\newcommand{\equi}{\Leftrightarrow}
\newcommand{\qequi}{\quad\equi\quad}
\newcommand{\zvec}[2]{\begin{bmatrix}#1\\#2\end{bmatrix}}
\newcommand{\zmat}[4]{\begin{bmatrix}#1&#2\\#3&#4\end{bmatrix}}
\DeclareMathOperator{\id}{id}
\def\set#1#2{\{#1\,:\,#2\}}
\newcommand{\cp}{c_{\mathtt{p}}}
\newcommand{\cpc}{c_{\mathtt{p},\circ}}
\newcommand{\cm}{c_{\mathtt{m}}}
\newcommand{\cmt}{c_{\mathtt{m,t}}}
\newcommand{\cmteps}{c_{\mathtt{m,t},\eps}}
\newcommand{\cmn}{c_{\mathtt{m,n}}}
\newcommand{\cmneps}{c_{\mathtt{m,n},\eps}}
\def\cR{\mathcal{R}}
\DeclareMathOperator{\Lebesgue}{\mathsf{L}}
\newcommand{\Lgen}[2]{\Lebesgue^{#1}_{#2}}
\def\Li{\Lgen{\infty}{}}
\def\Lt{\Lgen{2}{}}
\def\Ltom{\Lt(\om)}
\def\Ltepsom{\Lgen{2}{\eps}(\om)}
\DeclareMathOperator{\Sobolev}{\mathsf{H}}
\newcommand{\Hgen}[3]{\overset{#3}{\Sobolev}{}^{#1}_{#2}}
\def\Ho{\Hgen{1}{}{}}
\def\Hoom{\Ho(\om)}
\def\Hoc{\Hgen{1}{}{\circ}}
\def\Hocom{\Hoc(\om)}
\DeclareMathOperator{\Cont}{\mathsf{C}}
\newcommand{\Cgen}[2]{\overset{#2}{\Cont}{}^{#1}}
\def\Cic{\Cgen{\infty}{\circ}}
\def\Cicom{\Cic(\om)}
\newcommand{\Hggen}[3]{\overset{#2}{\Sobolev}(\grad;#3)}
\newcommand{\Hdgen}[3]{\overset{#2}{\Sobolev}(\div_{#1};#3)}
\newcommand{\Hgom}{\Hggen{}{}{\om}}
\newcommand{\Hdom}{\Hdgen{}{}{\om}}
\newcommand{\Hgcom}{\Hggen{}{\circ}{\om}}
\newcommand{\Hdcom}{\Hdgen{}{\circ}{\om}}
\newcommand{\scps}[2]{\langle#1,#2\rangle}
\newcommand{\scpsom}[2]{\scps{#1}{#2}_{\om}}
\newtheorem{lem}{Lemma}
\newtheorem{theo}[lem]{Theorem}
\newtheorem{rem}[lem]{Remark}
\DeclareMathOperator{\rotspace}{\mathsf{R}}
\newcommand{\rotgen}[2]{\overset{#2}{\rotspace}{}_{#1}}
\newcommand{\rotgenom}[2]{\rotgen{#1}{#2}(\om)}
\newcommand{\rom}{\rotgenom{}{}}
\newcommand{\rcom}{\rotgenom{}{\circ}}
\newcommand{\rzom}{\rotgenom{0}{}}
\newcommand{\rczom}{\rotgenom{0}{\circ}}
\newcommand{\rcalom}{\cR(\om)}
\newcommand{\rcalcom}{\overset{\circ}{\cR}(\om)}
\DeclareMathOperator{\divspace}{\mathsf{D}}
\newcommand{\divgen}[2]{\overset{#2}{\divspace}{}_{#1}}
\newcommand{\divgenom}[2]{\divgen{#1}{#2}(\om)}
\newcommand{\dom}{\divgenom{}{}}
\newcommand{\dcom}{\divgenom{}{\circ}}
\newcommand{\dzom}{\divgenom{0}{}}
\newcommand{\dczom}{\divgenom{0}{\circ}}
\newcommand{\hoom}{\Hoom}
\newcommand{\hocom}{\Hocom}
\DeclareMathOperator{\harmonic}{\mathcal{H}}
\newcommand{\harm}[2]{\harmonic^{#1}_{#2}}
\newcommand{\harmom}[2]{\harm{#1}{#2}(\om)}
\newcommand{\harmdom}{\harmom{}{\mathtt{D}}}
\newcommand{\harmdepsom}{\harmom{}{\mathtt{D},\eps}}
\newcommand{\harmdidom}{\harmom{}{\mathtt{D},\id}}
\newcommand{\harmnom}{\harmom{}{\mathtt{N}}}
\newcommand{\harmnepsom}{\harmom{}{\mathtt{N},\eps}}
\newcommand{\pid}{\pi_{\mathtt{D}}}
\newcommand{\pin}{\pi_{\mathtt{N}}}
\newcommand{\scpsepsom}[2]{\scps{#1}{#2}_{\om,\eps}}
\newcommand{\normo}[1]{\left|#1\right|} 
\newcommand{\normos}[1]{|#1|} 
\newcommand{\normosom}[1]{\normos{#1}_{\om}} 
\newcommand{\normosepsom}[1]{\normos{#1}_{\om,\eps}}
\newcommand{\tcomp}[1]{#1_{\mathtt{t}}}
\newcommand{\ncomp}[1]{#1_{\mathtt{n}}}
\newcommand{\ttr}[2]{\tcomp{#1}|_{#2}}
\newcommand{\ntr}[2]{\ncomp{#1}|_{#2}}
\newcommand{\ttrG}[1]{\ttr{#1}{\Gamma}}
\newcommand{\ntrG}[1]{\ntr{#1}{\Gamma}}
\newcommand{\trbd}[2]{{#1}|_{#2}}
\newcommand{\trG}[1]{\trbd{#1}{\Gamma}}
\DeclareMathOperator{\diam}{diam}
\newcommand{\diamom}{\diam(\om)}
\begin{document}

\maketitle{}

\begin{center}
{\tt Dedicated to Sergey Igorevich Repin
on the occasion of his 60th birthday}
\end{center}

\begin{abstract}
We prove that for bounded and convex domains in three dimensions, 
the Maxwell constants are bounded from below and above by Friedrichs' and Poincar\'e's constants.
In other words, the second Maxwell eigenvalues
lie between the square roots of the second Neumann-Laplace 
and the first Dirichlet-Laplace eigenvalue.\\
\keywords{Maxwell's equations, Maxwell constant,
second Maxwell eigenvalue, electro statics, magneto statics,
Poincar\'e inequality, Friedrichs inequality, 
Poincar\'e constant, Friedrichs constant}
\end{abstract}

\tableofcontents

\section{Introduction}

It is well known that, e.g., for bounded Lipschitz domains $\om\subset\rt$,
a square integrable vector field $v$ having square integrable divergence $\div v$
and square integrable rotation vector field $\rot v$ as well as
vanishing tangential or normal component on the boundary $\ga$, i.e,
$\ttrG{v}=0$ resp.~$\ntrG{v}=0$, satisfies the Maxwell estimate
\begin{align}
\mylabel{maxestintro}
\int_{\om}|v|^2\leq\cm^2\int_{\om}\big(|\rot v|^2+|\div v|^2\big),
\end{align}
if in addition $v$ is perpendicular to the so called Dirichlet or Neumann fields, i.e.,
$$\int_{\om}v\cdot w=0\quad\forall\,w\in\harmom{}{}{},$$
where
$$\harmom{}{}{}=
\begin{cases}
\harmdom:=\set{w\in\Ltom}{\rot w=0,\,\div w=0,\,\ttrG{w}=0},&
\text{if }\ttrG{v}=0,\\
\harmnom:=\set{w\in\Ltom}{\rot w=0,\,\div w=0,\,\ntrG{w}=0},&
\text{if }\ntrG{v}=0
\end{cases}$$
holds. Here, $\cm$ is a positive constant independent of $v$,
which will be called Maxwell constant.
See, e.g., \cite{picardpotential,picardboundaryelectro,leisbook,webercompmax}.
We note that \eqref{maxestintro} is valid in much more general situations
modulo some more or less obvious modifications, such as
for mixed boundary conditions, in unbounded (like exterior) domains, 
in domains $\om\subset\rN$, on $N$-dimensional Riemannian manifolds, 
for differential forms or in the case of inhomogeneous media.
See, e.g.,\cite{jochmanncompembmaxmixbc,paulystatic,paulydeco,
picardboundaryelectro,picardcomimb,picarddeco,webercompmax,weckmax}.

So far, to the best of the author's knowledge, general bounds for the Maxwell constants $\cm$ are unknown.
On the other hand, at least estimates for $\cm$ from above are very important
from the point of view of applications, such as preconditioning
or a priori and a posteriori error estimation for numerical methods.

In this contribution we will prove that for 
bounded and convex domains $\om\subset\rt$
\begin{align}
\mylabel{cmcpintro}
\cpc\leq\cm\leq\cp\leq\diamom/\pi
\end{align}
holds true, where $0<\cpc<\cp$ are the Poincar\'e constants, 
such that for all square integrable functions $u$ 
having square integrable gradient $\na u$
$$\int_{\om}|u|^2\leq\cpc^2\int_{\om}|\na u|^2\quad\text{resp.~}\quad
\int_{\om}|u|^2\leq\cp^2\int_{\om}|\na u|^2$$
holds, if $\trG{u}=0$ resp.~$\ds\int_{\om}u=0$. 
While the result \eqref{cmcpintro} is already well known in two dimensions,
even for general Lipschitz domains $\om\subset\rz^{2}$
(except of the last inequality), it is new in three dimensions.
We note that the last inequality in \eqref{cmcpintro}
has been proved in the famous paper of Payne and Weinberger \cite{payneweinbergerpoincareconvex},
where also the optimality of the estimate was shown.
A small mistake in this paper has been corrected later in \cite{bebendorfpoincareconvex}.
We will prove the crucial and from the point of view of applications most interesting inequality
$\cm\leq\cp$ also for polyhedral domains in $\rt$,
which might not be convex but still
allow the $\hoom$-regularity for solutions of Maxwell's equations.
We will give a general result for non-smooth and inhomogeneous, anisotropic media as well,
and even a refinement of \eqref{cmcpintro}.
Let us note that our methods are only based on elementary calculations.

\section{Preliminaries}

Throughout this paper
let $\om\subset\rt$ be a bounded Lipschitz domain.
Many of our results hold true under weaker assumptions on the regularity
of the boundary $\ga:=\pom$. Essentially we need 
the compact embeddings \eqref{allcompactembrellich}-\eqref{allcompactembmaxn} to hold. 
We will use the standard Lebesgue spaces
$\Ltom$ of square integrable functions or vector 
(or even tensor) fields
equipped with the usual $\Ltom$-scalar product $\scpsom{\,\cdot\,}{\,\cdot\,}$
and $\Ltom$-norm $\normosom{\,\cdot\,}$.
Moreover, we will work with the standard $\Ltom$-Sobolev spaces
for the gradient $\grad=\na$, the rotation $\rot=\na\times$
and the divergence $\div=\na\cdot$ denoted by
\begin{align*}
\hoom&:=\Hgom,&\hocom&:=\Hgcom:=\ol{\Cicom}^{\hoom},\\
\dom&:=\Hdom,&\dcom&:=\Hdcom:=\ol{\Cicom}^{\dom},\\
\rom&:=\Hgen{}{}{}(\rot;\om),&\rcom&:=\Hgen{}{}{\circ}(\rot;\om):=\ol{\Cicom}^{\rom}.
\end{align*}
In the latter three Hilbert spaces the classical homogeneous scalar, normal and tangential
boundary traces are generalized, respectively.
An index zero at the lower right corner of the latter spaces indicates a vanishing derivative, e.g.,
$$\rczom:=\set{E\in\rcom}{\rot E=0},\quad\dzom:=\set{E\in\dom}{\div E=0}.$$
Moreover, we introduce a symmetric, bounded ($\Li$) and uniformly positive definite
matrix field $\eps:\om\to\rttt$ and 
the spaces of (harmonic) Dirichlet and Neumann fields
$$\harmdepsom:=\rczom\cap\eps^{-1}\dzom,\quad\harmnepsom:=\rzom\cap\eps^{-1}\dczom.$$
We will also use the weighted $\eps$-$\Ltom$-scalar product 
$\scpsepsom{\,\cdot\,}{\,\cdot\,}:=\scpsom{\eps\,\cdot\,}{\,\cdot\,}$
and the corresponding induced weighted 
$\eps$-$\Ltom$-norm $\normosepsom{\,\cdot\,}:=\scpsepsom{\,\cdot\,}{\,\cdot\,}^{1/2}$. 
Moreover, $\bot_{\eps}$ denotes orthogonality with respect to the $\eps$-$\Ltom$-scalar product.
If we equip $\Ltom$ with this weighted scalar product we write $\Ltepsom$.
If $\eps$ equals the identity $\id$, we skip it in our notations,
e.g., we write $\bot:=\bot_{\id}$ and $\harmdom:=\harmdidom$.
By the assumptions on $\eps$ we have
\begin{align}
\mylabel{epsuoone}
\exists\,\epsu,\epso>0\quad\forall\,E\in\Ltom\quad
\epsu^{-2}\normosom{E}^2\leq\scpsom{\eps E}{E}\leq\epso^2\normosom{E}^2
\end{align}
and we note $\normosepsom{E}^2=\scpsom{\eps E}{E}=\normosom{\eps^{1/2}E}^2$
as well as $\normosom{\eps E}=\normosepsom{\eps^{1/2}E}$. Thus, for all $E\in\Ltom$
\begin{align}
\mylabel{epsuotwo}
\epsu^{-1}\normosom{E}&\leq\normosepsom{E}\leq\epso\normosom{E},&
\epsu^{-1}\normosepsom{E}&\leq\normosom{\eps E}\leq\epso\normosepsom{E}.
\end{align}
For later purposes let us also define $\epsh:=\max\{\epsu,\epso\}$.

We have the following compact embeddings:
\begin{align}
\mylabel{allcompactembrellich}
\hocom\subset\hoom&\hookrightarrow\Ltom&\text{(Rellich's selection theorem)}&\\
\mylabel{allcompactembmaxt}
\rcom\cap\eps^{-1}\dom&\hookrightarrow\Ltom&\text{(tangential Maxwell compactness property)}&\\
\mylabel{allcompactembmaxn}
\rom\cap\eps^{-1}\dcom&\hookrightarrow\Ltom&\text{(normal Maxwell compactness property)}&
\end{align}

It is well known and easy to prove by standard indirect arguments that
\eqref{allcompactembrellich} implies the Poincar\'e estimates
\begin{align}
\mylabel{poincarehoc}
\exists\,\cpc&>0&\forall\,u&\in\hocom&\normosom{u}&\leq\cpc\normosom{\na u},\\
\mylabel{poincareho}
\exists\,\cp&>0&\forall\,u&\in\hoom\cap\rz^{\bot}&\normosom{u}&\leq\cp\normosom{\na u}.
\end{align}
Furthermore
$$\cpc^2=\frac{1}{\lambda_{1}}<\frac{1}{\mu_{2}}=\cp^2$$
holds, where $\lambda_{1}$ is the first Dirichlet 
and $\mu_{2}$ the second Neumann eigenvalue of the Laplacian.
We even have $0<\mu_{n+1}<\lambda_{n}$ for all $n\in\nz$,
see e.g. \cite{filonovdirneulapeigen} and the literature cited there.

Analogously, \eqref{allcompactembmaxt} implies $\dim\harmdepsom<\infty$\footnote{
$d_{\mathtt{D}}:=\dim\harmdepsom$ is finite and independent of $\eps$.
In particular, $d_{\mathtt{D}}$ depends just on the topology of $\om$.
More precisely, $d_{\mathtt{D}}=\beta_{2}$, the second Betti number of $\om$.
A similar result holds also for the Neumann fields, i.e.,
$d_{\mathtt{N}}:=\dim\harmnepsom=\beta_{1}$.}, 
since the unit ball in $\harmdepsom$ is compact,
and the tangential Maxwell estimate, i.e.,
there exists $\cmteps>0$ such that
\begin{align}
\mylabel{maxestelec}
\forall\,E&\in\rcom\cap\eps^{-1}\dom&
\normosepsom{(1-\pid)E}
&\leq\cmteps\big(\normosom{\rot E}^2+\normosom{\div\eps E}^2\big)^{1/2},
\end{align}
where $\pid:\Ltepsom\to\harmdepsom$ denotes the $\eps$-$\Ltom$-orthogonal projector onto Dirichlet fields.
Similar results hold if one replaces the tangential or electric boundary condition 
by the normal or magnetic one. 
More precisely, \eqref{allcompactembmaxn} implies $\dim\harmnepsom<\infty$ 
and the corresponding normal Maxwell estimate, i.e.,
there exists $\cmneps>0$ such that
\begin{align}
\mylabel{maxestmag}
\forall\,H&\in\rom\cap\eps^{-1}\dcom&
\normosepsom{H-\pin H}
&\leq\cmneps\big(\normosom{\rot H}^2+\normosom{\div\eps H}^2\big)^{1/2},
\end{align}
where $\pin:\Ltepsom\to\harmnepsom$ 
denotes the $\eps$-$\Ltom$-orthogonal projector onto Neumann fields.
We note that $\sqrt{\cmteps^2+1}$ can also be seen as the norm of the
inverse $M^{-1}$ of the corresponding electro static Maxwell operator 
$$\Abb{M}{\rcom\cap\eps^{-1}\dom\cap\harmdepsom^{\bot_{\eps}}}{\rot\rcom\times\Ltom}{E}{(\rot E,\div\eps E)}.$$
The analogous statement holds for $\cmneps$ as well.

The compact embeddings \eqref{allcompactembrellich}-\eqref{allcompactembmaxn} 
hold for more general bounded domains with
weaker regularity of the boundary $\ga$, 
such as domains with cone property, restricted cone property or just $p$-cusp-property.
See, e.g., \cite{amrouchebernardidaugegiraultvectorpot,amroucheciarletciarletweakvectorpot,
picardpotential,picardboundaryelectro,picardcomimb,picarddeco,picardweckwitschxmas,
webercompmax,weckmax,witschremmax,leisbook}.
Note that the Maxwell compactness properties and hence the Maxwell estimates 
hold for mixed boundary conditions as well, 
see \cite{jochmanncompembmaxmixbc,goldshteinmitreairinamariushodgedecomixedbc,
jakabmitreairinamariusfinensolhodgedeco}.
The boundedness of the underlying domain $\om$ is crucial,
since one has to work in weighted Sobolev spaces in unbounded (like exterior) domains,
see \cite{kuhnpaulyregmax,leistheoem,leisbook,paulytimeharm,paulystatic,
paulydeco,paulyasym,picardpotential,picardweckwitschxmas}.

As always in the theory of Maxwell's equations, we need another crucial tool,
the Helmholtz or Weyl decompositions of vector fields into irrotational and solenoidal vector fields.
We have
\begin{align*}
\Ltepsom
&=\na\hocom\oplus_{\eps}\eps^{-1}\dzom\\
&=\rczom\oplus_{\eps}\eps^{-1}\rot\rom\\
&=\na\hocom\oplus_{\eps}\harmdepsom\oplus_{\eps}\eps^{-1}\rot\rom,\\
\Ltepsom
&=\na\hoom\oplus_{\eps}\eps^{-1}\dczom\\
&=\rzom\oplus_{\eps}\eps^{-1}\rot\rcom\\
&=\na\hoom\oplus_{\eps}\harmnepsom\oplus_{\eps}\eps^{-1}\rot\rcom,
\end{align*}
where $\oplus_{\eps}$ denotes the orthogonal sum 
with respect the latter scalar product, and note
\begin{align*}
\na\hocom
&=\rczom\cap\harmdepsom^{\bot_{\eps}},&
\eps^{-1}\rot\rom
&=\eps^{-1}\dzom\cap\harmdepsom^{\bot_{\eps}},\\
\na\hoom
&=\rzom\cap\harmnepsom^{\bot_{\eps}},&
\eps^{-1}\rot\rcom
&=\eps^{-1}\dczom\cap\harmnepsom^{\bot_{\eps}}.
\end{align*}
Moreover, with
\begin{align*}
\rcalom:=\rom\cap\rot\rcom
&=\rom\cap\dczom\cap\harmnom^{\bot},\\
\rcalcom:=\rcom\cap\rot\rom
&=\rcom\cap\dzom\cap\harmdom^{\bot}
\end{align*}
we see
$$\rot\rom=\rot\rcalom,\quad
\rot\rcom=\rot\rcalcom.$$
Note that all occurring spaces are closed subspaces of $\Ltom$,
which follows immediately by the estimates \eqref{poincarehoc}-\eqref{maxestmag}.
More details about the Helmholtz decompositions can be found e.g. in \cite{leisbook}.

If $\om$ is even convex\footnote{Note that convex domains
are always Lipschitz, see e.g. \cite{grisvardbook}.} we have some simplifications
due to the vanishing of Dirichlet and Neumann fields, i.e.,
$\harmdepsom=\harmnepsom=\{0\}$. Then
\eqref{maxestelec} and \eqref{maxestmag} simplify to
\begin{align}
\mylabel{maxestelecconv}
\forall\,E&\in\rcom\cap\eps^{-1}\dom&
\normosepsom{E}
&\leq\cmteps\big(\normosom{\rot E}^2+\normosom{\div\eps E}^2\big)^{1/2},\\
\mylabel{maxestmagconv}
\forall\,H&\in\rom\cap\eps^{-1}\dcom&
\normosepsom{H}
&\leq\cmneps\big(\normosom{\rot H}^2+\normosom{\div\eps H}^2\big)^{1/2}
\end{align}
and we have
$$\rczom=\na\hocom,\quad\rzom=\na\hoom,\quad\dzom=\rot\rom,\quad\dczom=\rot\rcom$$
as well as the simple Helmholtz decompositions
\begin{align}
\mylabel{helmdecoconvex}
\Ltepsom=\na\hocom\oplus_{\eps}\eps^{-1}\rot\rom,\quad
\Ltepsom=\na\hoom\oplus_{\eps}\eps^{-1}\rot\rcom.
\end{align}

The aim of this paper is to give a computable estimate for the two Maxwell 
constants $\cmteps$ and $\cmneps$.

\section{The Maxwell Estimates}

First, we have an estimate for irrotational fields, which is well known.

\begin{lem}
\mylabel{lemNarbdiv}
For all $E\in\na\hocom\cap\eps^{-1}\dom$ 
and all $H\in\na\hoom\cap\eps^{-1}\dcom$
$$\normosepsom{E}\leq\epsu\cpc\normosom{\div\eps E},\quad
\normosepsom{H}\leq\epsu\cp\normosom{\div\eps H}.$$
\end{lem}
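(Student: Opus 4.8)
The plan is to reduce each inequality to the corresponding Poincar\'e estimate \eqref{poincarehoc} or \eqref{poincareho} by writing the vector field as a gradient and controlling the norms by the weight bounds \eqref{epsuotwo}. Consider first $E\in\na\hocom\cap\eps^{-1}\dom$. By definition of $\na\hocom$ there is a scalar potential $u\in\hocom$ with $E=\na u$. I would then test $\div\eps E$ against $u$: integration by parts is legitimate since $u\in\hocom$ and $\eps E\in\dom$, so that
$$\normosepsom{E}^2=\scpsom{\eps\na u}{\na u}=-\scpsom{\div\eps E}{u}\leq\normosom{\div\eps E}\,\normosom{u}.$$
Now apply the Poincar\'e inequality \eqref{poincarehoc}, $\normosom{u}\leq\cpc\normosom{\na u}=\cpc\normosom{E}$, and then bound $\normosom{E}$ by $\epsu\normosepsom{E}$ using the left inequality in \eqref{epsuotwo}. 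This gives $\normosepsom{E}^2\leq\epsu\cpc\normosom{\div\eps E}\,\normosepsom{E}$, and dividing by $\normosepsom{E}$ (the case $E=0$ being trivial) yields the first claim.

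For the second field $H\in\na\hoom\cap\eps^{-1}\dcom$ the argument is the same, with two small differences. The potential $u\in\hoom$ now lies only in $\hoom$, not $\hocom$, but the boundary term in the integration by parts still vanishes because $\eps H\in\dcom$ carries the vanishing normal trace; so again $\normosepsom{H}^2=-\scpsom{\div\eps H}{u}$. To invoke \eqref{poincareho} I need $u\perp\rz$, i.e. $\int_\om u=0$; this I may assume without loss of generality, since adding a constant to $u$ changes neither $\na u=H$ nor the value of the boundary-corrected integration by parts (one should note here that $\scpsom{\div\eps H}{1}=-\langle\ntrG{\eps H},1\rangle=0$, so the identity is insensitive to the additive constant). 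With the normalized potential, \eqref{poincareho} gives $\normosom{u}\leq\cp\normosom{H}$, and the weight estimate \eqref{epsuotwo} finishes it exactly as before.

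The only point requiring a word of care is the justification of the integration by parts and, in the Neumann case, the compatibility condition $\int_\om u=0$; everything else is a direct chain of the already-stated inequalities, so I do not expect a genuine obstacle here — this lemma is the easy, preparatory step. I would state the two computations in parallel so that the symmetry between the electric/tangential and magnetic/normal cases is visible, since that parallelism will be reused when the Helmholtz decomposition \eqref{helmdecoconvex} is brought in to treat the full Maxwell estimates \eqref{maxestelecconv}--\eqref{maxestmagconv}.
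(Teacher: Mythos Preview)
Your argument is correct and coincides with the paper's proof essentially line for line: pick a scalar potential, integrate by parts, apply the appropriate Poincar\'e inequality \eqref{poincarehoc} or \eqref{poincareho}, and absorb the unweighted norm via \eqref{epsuotwo}. The only difference is that you spell out why one may assume the $\hoom$-potential has zero mean and why the partial integration is justified, whereas the paper simply picks $\varphi\in\hoom$ with $\varphi\perp\rz$ from the outset.
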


\begin{proof}
Pick a scalar potential $\varphi\in\hocom$ with $E=\na\varphi$. 
Then, by \eqref{poincarehoc}
\begin{align*}
\normosepsom{E}^2
&=\scpsom{\eps E}{\na\varphi}
=-\scpsom{\div\eps E}{\varphi}
\leq\normosom{\div\eps E}\normosom{\varphi}
\leq\cpc\normosom{\div\eps E}\normosom{\na\varphi}\\
&=\cpc\normosom{\div\eps E}\normosom{E}
\leq\epsu\cpc\normosom{\div\eps E}\normosepsom{E}.
\end{align*}
Let $\varphi\in\hoom$ with $H=\na\varphi$ and $\varphi\bot\rz$. 
Since $\eps H\in\dcom$ we obtain as before and by \eqref{poincareho}
\begin{align*}
\normosepsom{H}^2
&=\scpsom{\eps H}{\na\varphi}
=-\scpsom{\div\eps H}{\varphi}
\leq\normosom{\div\eps H}\normosom{\varphi}
\leq\cp\normosom{\div\eps H}\normosom{\na\varphi}\\
&=\cp\normosom{\div\eps H}\normosom{H}
\leq\epsu\cp\normosom{\div\eps H}\normosepsom{H},
\end{align*}
which finishes the proof.
\end{proof}

\begin{rem}
\mylabel{remNarbdiv}
Without any change, Lemma \ref{lemNarbdiv} extends to Lipschitz domains 
$\om\subset\rN$ of arbitrary dimension.
\end{rem}

To get similar estimates for solenoidal vector fields we 
need a crucial lemma from \cite[Theorem 2.17]{amrouchebernardidaugegiraultvectorpot},
see also \cite{saranenineqfried,grisvardbook,giraultraviartbook,costabelcoercbilinMax} 
for related partial results.

\begin{lem}
\mylabel{french}
Let $\om$ be convex
and $E\in\rcom\cap\dom$ or $E\in\rom\cap\dcom$.
Then $E\in\hoom$ and
\begin{align}
\mylabel{frenchformula}
\normosom{\na E}^2\leq\normosom{\rot E}^2+\normosom{\div E}^2.
\end{align}
\end{lem}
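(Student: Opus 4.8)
The plan is to reduce the identity \eqref{frenchformula} to the classical integration-by-parts formula relating the full gradient to the rotation, divergence, and a boundary term, and then to show that convexity forces the boundary term to have a favorable sign. First I would recall the pointwise/distributional identity
$$|\na E|^2=|\rot E|^2+|\div E|^2+\div\big((E\cdot\na)E-(\div E)E\big),$$
valid for smooth vector fields, which upon integration over $\om$ gives
$$\normosom{\na E}^2=\normosom{\rot E}^2+\normosom{\div E}^2+\int_{\ga}\big((E\cdot\na)E-(\div E)E\big)\cdot n,$$
where $n$ is the outward unit normal on $\ga$. So the whole statement hinges on showing that this boundary integral is nonpositive whenever $E$ satisfies either $\ttrG{E}=0$ (so $E$ is normal on $\ga$) or $\ntrG{E}=0$ (so $E$ is tangential on $\ga$), and that $E$ indeed lies in $\hoom$ in the first place.

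The boundary regularity $E\in\hoom$ is the first thing I would establish, and I would simply invoke the cited result \cite[Theorem 2.17]{amrouchebernardidaugegiraultvectorpot}; this is the part of the lemma that is genuinely borrowed rather than reproved, and it relies on the convexity of $\om$ via the second-fundamental-form sign condition for the relevant boundary value problems. With $E\in\hoom$ in hand, the trace of $E$ on $\ga$ is well defined, and I can decompose $E$ on $\ga$ into its tangential and normal parts to analyze the boundary term.

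The heart of the argument is the geometric computation of the boundary integrand. For the tangential case ($E$ tangent to $\ga$, i.e. $E\cdot n=0$ on $\ga$) one can show, using the fact that differentiating the constraint $E\cdot n=0$ along tangential directions produces the second fundamental form $\mathrm{II}$ of $\ga$, that
$$\big((E\cdot\na)E-(\div E)E\big)\cdot n=-\,\mathrm{II}(E,E)\quad\text{on }\ga;$$
convexity of $\om$ means $\mathrm{II}\ge 0$ (with the appropriate sign convention for the outward normal), so this term is $\le 0$ and \eqref{frenchformula} follows. For the normal case ($E$ parallel to $n$ on $\ga$, i.e. the tangential part of $E$ vanishes) one writes $E=(E\cdot n)n$ on $\ga$ and computes the analogous expression, again producing a curvature term times $|E|^2$ with a sign dictated by convexity. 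The main obstacle is precisely this differential-geometric identification of the boundary integrand and checking the sign conventions carefully: one has to justify the curvature computation for merely $\hoom$ fields (by density of smooth fields, or by working with the $H^{1/2}$-traces directly), and make sure the Lipschitz-then-actually-smoother regularity of $\ga$ near the points where the computation is done is sufficient — convex domains have $\ga$ that is $C^{1,1}$-ish in the sense needed for $\mathrm{II}\ge 0$ to make sense as a nonnegative measure. Once the sign is pinned down, everything else is a routine integration by parts; I expect the write-up to lean heavily on the already-cited literature for both the $\hoom$-regularity and the precise form of the boundary term, so that the "proof" here may amount to little more than assembling these ingredients.
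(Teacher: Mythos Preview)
The paper does not actually prove this lemma: it is stated as a quotation from \cite[Theorem 2.17]{amrouchebernardidaugegiraultvectorpot} (with further references to \cite{saranenineqfried,grisvardbook,giraultraviartbook,costabelcoercbilinMax}), and no argument is given. So there is nothing in the paper to compare against; you have in fact supplied more than the author does.

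Your sketch is the standard route by which the cited references establish the result: the Gaffney-type identity
\[
\normosom{\na E}^2=\normosom{\rot E}^2+\normosom{\div E}^2+\int_{\ga}\big((E\cdot\na)E-(\div E)E\big)\cdot n,
\]
followed by the identification of the boundary integrand with a second-fundamental-form expression whose sign is controlled by convexity. Two small remarks. First, since you already invoke \cite[Theorem 2.17]{amrouchebernardidaugegiraultvectorpot} for the $\hoom$-regularity, note that this same theorem already delivers the inequality \eqref{frenchformula} as well; the subsequent curvature computation you outline is precisely how that theorem is proved, so you are reconstructing the cited result rather than adding to it. Second, your handling of the boundary regularity is a little optimistic: a general convex domain need not be $C^{1,1}$ (a cube is convex), so $\mathrm{II}$ is not a classical object. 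The usual fix, and what the cited literature does, is to approximate $\om$ from inside by smooth convex subdomains, apply the smooth identity with $\mathrm{II}\ge 0$ there, and pass to the limit; you gesture at this but it deserves an explicit sentence if you write it out.
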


We note that for $E\in\hocom$ it is clear that for any domain $\om\subset\rt$
\begin{align}
\mylabel{frenchformulaequal}
\normosom{\na E}^2=\normosom{\rot E}^2+\normosom{\div E}^2
\end{align}
holds since $-\Delta=\rot\rot-\na\div$.
This formula is no longer valid if $E$ has just the tangential
or normal boundary condition but for convex domains 
the inequality \eqref{frenchformula} remains true.

\begin{lem}
\mylabel{lemNThreerot}
Let $\om$ be convex. For all vector fields $E\in\rcom\cap\eps^{-1}\rot\rom$
and all vector fields $H\in\rom\cap\eps^{-1}\rot\rcom$
$$\normosepsom{E}\leq\epso\cp\normosom{\rot E},\quad
\normosepsom{H}\leq\epso\cp\normosom{\rot H}.$$
\end{lem}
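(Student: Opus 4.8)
The plan is to mimic the structure of Lemma \ref{lemNarbdiv}, but using a \emph{vector potential} instead of a scalar potential, with Lemma \ref{french} supplying the missing a priori regularity. First I would treat the electric case: given $E\in\rcom\cap\eps^{-1}\rot\rom$, the definition of $\rcalom$ (together with the convexity identity $\eps^{-1}\rot\rom=\eps^{-1}\dzom\cap\harmdepsom^{\bot_{\eps}}=\eps^{-1}\dczom^{\bot}$-type splitting) shows that $\eps E\in\rot\rom$, so there is a vector potential $A\in\rom$ with $\rot A=\eps E$. Since the Helmholtz decomposition $\Ltepsom=\na\hoom\oplus_{\eps}\eps^{-1}\rot\rcom$ holds (convex case, \eqref{helmdecoconvex}), and $\rot\rom=\rot\rcalom$, I may choose $A$ in the reduced space $\rcalom=\rom\cap\dczom\cap\harmnom^{\bot}=\rom\cap\dczom$ (Neumann fields vanish), i.e.\ $A\in\rom\cap\dczom$ with $\div A=0$; equivalently $A\in\rom\cap\eps^{-1}\dcom$ after noting $\dczom\subset\eps^{-1}\dcom$ is \emph{not} automatic, so more precisely I take $A\in\rom\cap\dcom$ with $\div A=0$. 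Then Lemma \ref{french} applies to $A$: $A\in\hoom$ with $\normosom{\na A}^2\le\normosom{\rot A}^2+\normosom{\div A}^2=\normosom{\rot A}^2=\normosom{\eps E}^2$.

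Next comes the duality step. Compute
\begin{align*}
\normosepsom{E}^2
=\scpsom{\eps E}{E}
=\scpsom{\rot A}{E}
=\scpsom{A}{\rot E},
\end{align*}
the last equality being integration by parts, valid because $E\in\rcom$ (homogeneous tangential trace) and $A\in\rom$. Now I need a Poincar\'e-type bound on $\normosom{A}$. Since $A\in\hoom$ by the previous step, and $A\in\dczom$ means $\div A=0$ so in particular $A$ has vanishing normal component in a weak sense; but the clean route is: each Cartesian component $A_j$ lies in $\hoom$, and I want $\normosom{A}\le\cp\normosom{\na A}$. This requires $A\bot\rz$ componentwise, i.e.\ $\int_\om A_j=0$ for each $j$. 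This I can arrange: the mean value $\int_\om A$ is a constant vector $c$; replacing $A$ by $A-c$ changes neither $\rot A$ nor $\div A$ nor membership in $\dczom$ (a constant has zero normal trace only if $c=0$, so here is a subtlety) — better, I use that $A\in\dczom$ with $\om$ connected forces, via $A=\na u$ on $\harmnom^{\bot}$... Actually the cleanest fix: choose $A$ from the start in $\rcalom=\rom\cap\dczom\cap\harmnom^{\bot}$; for a convex (hence connected, $\beta_1=0$) domain $\harmnom=\{0\}$, so $\rcalom=\rom\cap\dczom$, and I instead argue componentwise Poincar\'e after subtracting the mean, accepting that the mean-subtracted field still has $L^2$ rotation and divergence equal to those of $A$ and is still in $\hoom$ — the membership $A-c\in\dczom$ is not needed for the final estimate, only $A-c\in\hoom$ with the same $\na(A-c)=\na A$ is needed. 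Thus $\normosom{A-c}\le\cp\normosom{\na(A-c)}=\cp\normosom{\na A}\le\cp\normosom{\eps E}\le\epso\cp\normosepsom{E}$, and since also $\scpsom{c}{\rot E}=0$ (constants integrate against $\rot E$ to give a boundary term that vanishes as $E\in\rcom$), we get $\scpsom{A}{\rot E}=\scpsom{A-c}{\rot E}\le\normosom{A-c}\normosom{\rot E}\le\epso\cp\normosepsom{E}\normosom{\rot E}$. Dividing by $\normosepsom{E}$ yields $\normosepsom{E}\le\epso\cp\normosom{\rot E}$. The magnetic case $H\in\rom\cap\eps^{-1}\rot\rcom$ is entirely symmetric: pick $A\in\rcom\cap\dzom$ ($\harmdom=\{0\}$) with $\rot A=\eps H$, apply Lemma \ref{french} to get $A\in\hoom$ with $\normosom{\na A}\le\normosom{\eps H}$, use $\normosepsom{H}^2=\scpsom{\rot A}{H}=\scpsom{A}{\rot H}$ (now legitimate since $A\in\rcom$, i.e.\ it carries the homogeneous tangential trace, not $H$), subtract the mean, and conclude $\normosepsom{H}\le\epso\cp\normosom{\rot H}$.

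The main obstacle I anticipate is the bookkeeping around \emph{which} vector potential to choose and \emph{why} the componentwise Poincar\'e inequality \eqref{poincareho} with constant $\cp$ (not $\cpc$) is the right one. One must be careful that the potential $A$ genuinely satisfies the hypotheses of Lemma \ref{french} — for the electric estimate $A$ needs the \emph{normal} boundary condition ($A\in\rom\cap\dcom$), while the field $E$ being controlled has the \emph{tangential} one ($E\in\rcom$); this crossover is exactly what makes the integration by parts $\scpsom{\rot A}{E}=\scpsom{A}{\rot E}$ produce no boundary term. A secondary subtlety is that after mean-subtraction the resulting $A-c$ may leave the divergence-free/trace class, but since only $A-c\in\hoom$ and the value of $\scpsom{A-c}{\rot E}$ are used, this is harmless; I would state this explicitly to avoid confusion. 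Finally, one should double-check that the $\eps$-dependence lands on the correct side — the estimate $\normosom{\na A}\le\normosom{\rot A}=\normosom{\eps H}\le\epso\normosepsom{H}$ produces the factor $\epso$ (upper bound on $\eps$), matching the claimed $\epso\cp$, consistent with Lemma \ref{french} being applied in the \emph{unweighted} $L^2$ setting.
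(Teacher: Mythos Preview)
Your electric case is correct and matches the paper's argument up to a cosmetic difference: the paper observes that the potential $\Phi\in\rcalom=\rom\cap\rot\rcom$ is itself a curl of some $\Psi\in\rcom$, so $\scpsom{\Phi}{a}=\scpsom{\rot\Psi}{a}=\scpsom{\Psi}{\rot a}=0$ and $\Phi$ already has zero mean; you instead subtract the mean $c$ and use $\scpsom{c}{\rot E}=\scpsom{\rot c}{E}=0$, which is legitimate because $E\in\rcom$ and constants lie in $\rom$. Both routes are fine.

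The magnetic case, however, is \emph{not} ``entirely symmetric,'' and your argument has a genuine gap. After picking $A\in\rcom\cap\dzom$ with $\rot A=\eps H$ and reaching $\normosepsom{H}^2=\scpsom{A}{\rot H}$, you subtract the mean $c$ of $A$ and need $\scpsom{c}{\rot H}=0$. But now $H\in\rom$ only, and nonzero constants do \emph{not} belong to $\rcom$, so the integration-by-parts identity no longer kills the term. Concretely, on the cube $\om=(0,\pi)^3$ take $A=(\sin x_2\sin x_3,0,0)$: one checks $A\in\rcom\cap\dzom$ yet $\int_\om A_1=4\pi\neq0$; and for a generic $H\in\rom$ the quantity $\int_\om\rot H$ need not vanish either (e.g.\ $H=(x_2,0,0)$ gives $\rot H=(0,0,-1)$). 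So neither ``$A$ has zero mean'' nor ``$\rot H\perp\rt$'' is available, and the step fails.

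The paper avoids this by \emph{not} applying Lemma \ref{french} to the potential in the magnetic case. Instead it keeps a potential $\Phi\in\rcom$ with $\rot\Phi=\eps H$, Helmholtz-decomposes $H=H_0+H_{\rot}\in\rzom\oplus\rcalom$, and applies Lemma \ref{french} to $H_{\rot}\in\rom\cap\dczom$. One then subtracts a constant $a$ from $H_{\rot}$; the cross term $\scpsom{\rot\Phi}{a}$ vanishes precisely because $\Phi\in\rcom$ (so $\scpsom{\rot\Phi}{a}=\scpsom{\Phi}{\rot a}=0$), and $\scpsom{\rot\Phi}{H_0}=0$ since $H_0\in\rzom$. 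The estimate closes via
\[
\normosepsom{H}^2=\scpsom{\rot\Phi}{H_{\rot}-a}\le\normosom{\eps H}\,\cp\normosom{\na H_{\rot}}\le\epso\cp\normosepsom{H}\normosom{\rot H}.
\]
The asymmetry is real: in the electric case the boundary condition sits on $E$, so constants can be absorbed on the potential side; in the magnetic case the boundary condition sits on the potential $\Phi$, so constants must be absorbed on the $H$ side, which forces you to put the $\hoom$-regularity there as well.
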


\begin{proof}
Since $\eps E\in\rot\rom=\rot\rcalom$ there exists a vector potential field
$\Phi\in\rcalom$ with $\rot\Phi=\eps E$
and $\Phi\in\Hoom$ by Lemma \ref{french} since 
$\rcalom=\rom\cap\dczom$.
Moreover, $\Phi=\rot\Psi$ can be represented by some $\Psi\in\rcom$.
Hence, for any constant vector $a\in\rt$ we have
$\scpsom{\Phi}{a}=\scpsom{\rot\Psi}{a}=0$. 
Thus, $\Phi$ belongs to $\Hoom\cap(\rt)^{\bot}$.
Then, since $E\in\rcom$ and by Lemma \ref{french} we get
\begin{align*}
\normosepsom{E}^2
&=\scpsom{E}{\eps E}
=\scpsom{E}{\rot\Phi}
=\scpsom{\rot E}{\Phi}
\leq\normosom{\rot E}\normosom{\Phi}
\leq\cp\normosom{\rot E}\normosom{\na\Phi}\\
&\leq\cp\normosom{\rot E}\normosom{\rot\Phi}
=\cp\normosom{\rot E}\normosom{\eps E}
\leq\epso\cp\normosom{\rot E}\normosepsom{E}.
\end{align*}
Since $\eps H\in\rot\rcom$ there exists a vector potential
$\Phi\in\rcom$ with $\rot\Phi=\eps H$. 
Using the Helmholtz decomposition $\Ltom=\rzom\oplus\rot\rcom$,
we decompose 
$$\rom\ni H=H_{0}+H_{\rot}\in\rzom\oplus\rcalom.$$
Then, $\rot H_{\rot}=\rot H$ and
again by Lemma \ref{french} we see $H_{\rot}\in\hoom$.
Let $a\in\rt$ such that $H_{\rot}-a\in\hoom\cap(\rt)^{\bot}$.
Since $\Phi\in\rcom$ and 
$\scpsom{\rot\Phi}{H_{0}}=0=\scpsom{\rot\Phi}{a}$
as well as by Lemma \ref{french} we obtain
\begin{align*}
\normosepsom{H}^2
&=\scpsom{\eps H}{H}
=\scpsom{\rot\Phi}{H}
=\scpsom{\rot\Phi}{H_{\rot}-a}
\leq\normosom{\eps H}\normosom{H_{\rot}-a}\\
&\leq\cp\normosom{\eps H}\normosom{\na H_{\rot}}
\leq\epso\cp\normosepsom{H}\normosom{\rot H_{\rot}}
=\epso\cp\normosepsom{H}\normosom{\rot H},
\end{align*}
completing the proof.
\end{proof}

\begin{rem}
\mylabel{lemNThreerottwod}
It is well known that Lemma \ref{lemNThreerot} 
holds in two dimensions for any Lipschitz domain $\om\subset\rz^2$.
This follows immediately from Lemma \ref{lemNarbdiv}
if we take into account that in two dimensions the rotation $\rot$
is given by the divergence $\div$ after $90^{\circ}$-rotation 
of the vector field to which it is applied.
We refer to the appendix for details.
\end{rem}

\begin{theo}
\mylabel{maintheo}
Let $\om$ be convex. Then, for all vector fields $E\in\rcom\cap\eps^{-1}\dom$
and all vector fields $H\in\rom\cap\eps^{-1}\dcom$
$$\normosepsom{E}^2
\leq\epsu^2\cpc^2\normosom{\div\eps E}^2+\epso^2\cp^2\normosom{\rot E}^2,\quad
\normosepsom{H}^2
\leq\epsu^2\cp^2\normosom{\div\eps H}^2+\epso^2\cp^2\normosom{\rot H}^2.$$
Thus, $\cmteps\leq\max\{\epsu\cpc,\epso\cp\}$ and
$$\cmteps,\cmneps\leq\epsh\cp\leq\epsh\diam(\om)/\pi.$$
\end{theo}

\begin{proof}
By the Helmholtz decomposition \eqref{helmdecoconvex} we have
$$\rcom\cap\eps^{-1}\dom\ni
E=E_{\na}+E_{\rot}\in\na\hocom\oplus_{\eps}\eps^{-1}\rot\rom$$
with $E_{\na}\in\na\hocom\cap\eps^{-1}\dom$ 
and $E_{\rot}\in\rcom\cap\eps^{-1}\rot\rom$ as well as
$$\div\eps E_{\na}=\div\eps E,\quad
\rot E_{\rot}=\rot E.$$
By Lemma \ref{lemNarbdiv} and Lemma \ref{lemNThreerot} and orthogonality we obtain
$$\normosepsom{E}^2
=\normosepsom{E_{\na}}^2+\normosepsom{E_{\rot}}^2
\leq\epsu^2\cpc^2\normosom{\div\eps E}^2+\epso^2\cp^2\normosom{\rot E}^2.$$
Similarly we have
$$\rom\cap\eps^{-1}\dcom\ni
H=H_{\na}+H_{\rot}\in\na\hoom\oplus_{\eps}\eps^{-1}\rot\rcom$$
with $H_{\na}\in\na\hoom\cap\eps^{-1}\dcom$ 
and $H_{\rot}\in\rom\cap\eps^{-1}\rot\rcom$ as well as
$$\div\eps H_{\na}=\div\eps H,\quad
\rot H_{\rot}=\rot H.$$
By Lemma \ref{lemNarbdiv} and Lemma \ref{lemNThreerot}
$$\normosepsom{H}^2
=\normosepsom{H_{\na}}^2+\normosepsom{H_{\rot}}^2
\leq\epsu^2\cp^2\normosom{\div\eps H}^2+\epso^2\cp^2\normosom{\rot H}^2,$$
which finishes the proof.
\end{proof}

Lower bounds can be computed even for general domains $\om$:

\begin{theo}
\mylabel{maintheolower}
It holds 
$$\frac{\cpc}{\epsu\epso^2}\leq\cmteps,\quad
\frac{\cp}{\epsu\epso^2}\leq\cmneps.$$
\end{theo}

\begin{proof}
Let $\lambda_{1}$ resp.~$\lambda_{1,\eps}$ be the first Dirichlet eigenvalue
of the negative Laplacian $-\Delta$ resp.~weighted Laplacian $-\div\eps\na$, i.e.,
$$\frac{1}{\cpc^2}
=\lambda_{1}
=\inf_{0\neq u\in\hocom}\frac{\normosom{\na u}^2}{\normosom{u}^2}
\geq\frac{1}{\epso^2}\inf_{0\neq u\in\hocom}\frac{\normosepsom{\na u}^2}{\normosom{u}^2}
=\frac{\lambda_{1,\eps}}{\epso^2}.$$
Hence $\lambda_{1,\eps}\leq(\epso/\cpc)^2$. 
Let $u\in\hocom$ be an eigenfunction to $\lambda_{1,\eps}$.
Note that $u$ satisfies 
$$\forall\,\varphi\in\hocom\quad\scpsom{\eps\na u}{\na\varphi}=\lambda_{1,\eps}\scpsom{u}{\varphi}.$$
Then $0\neq E:=\na u$ belongs to $\na\hocom\cap\epsmo\dom=\rczom\cap\epsmo\dom\cap\harmdepsom^{\bot_{\eps}}$ 
and solves $-\div\eps E=-\div\eps\na u=\lambda_{1,\eps}u$.
By \eqref{maxestelec} and \eqref{poincarehoc} we have
\begin{align*}
\normosepsom{E}
&\leq\cmteps\normosom{\div\eps E}
=\cmteps\lambda_{1,\eps}\normosom{u}
\leq\cmteps\lambda_{1,\eps}\cpc\normosom{\na u}
\leq\frac{\cmteps}{\cpc}\epso^2\epsu\normosepsom{E}
\end{align*}
yielding $\cpc\leq\cmteps\epsu\epso^2$.
Now, we follow the same arguments for the Neumann eigenvalues.
Let $\mu_{2}$ resp.~$\mu_{2,\eps}$ be the second Neumann eigenvalue
of the negative Laplacian $-\Delta$ resp.~weighted Laplacian $-\div\eps\na$, i.e.,
$$\frac{1}{\cp^2}
=\mu_{2}
=\inf_{0\neq u\in\hoom\cap\rz^{\bot}}\frac{\normosom{\na u}^2}{\normosom{u}^2}
\geq\frac{1}{\epso^2}\inf_{0\neq u\in\hoom\cap\rz^{\bot}}\frac{\normosepsom{\na u}^2}{\normosom{u}^2}
=\frac{\mu_{2,\eps}}{\epso^2}.$$
Hence $\mu_{2,\eps}\leq(\epso/\cp)^2$. 
Let $u\in\hoom\cap\rz^{\bot}$ be an eigenfunction to $\mu_{2,\eps}$.
Note that $u$ satisfies 
$$\forall\,\varphi\in\hoom\cap\rz^{\bot}\quad\scpsom{\eps\na u}{\na\varphi}=\mu_{2,\eps}\scpsom{u}{\varphi}$$
and that this relation holds even for all $\varphi\in\hoom$.
Then $0\neq H:=\na u$ belongs to $\na\hoom\cap\epsmo\dcom=\rzom\cap\epsmo\dcom\cap\harmnepsom^{\bot_{\eps}}$ 
and $-\div\eps H=-\div\eps\na u=\mu_{2,\eps}u$ holds.
By \eqref{maxestmag} and \eqref{poincareho} we have
\begin{align*}
\normosepsom{H}
&\leq\cmneps\normosom{\div\eps H}
=\cmneps\mu_{2,\eps}\normosom{u}
\leq\cmneps\mu_{2,\eps}\cp\normosom{\na u}
\leq\frac{\cmneps}{\cp}\epso^2\epsu\normosepsom{H}
\end{align*}
yielding $\cp\leq\cmneps\epsu\epso^2$.
The proof is complete.
\end{proof}

\begin{rem}
\mylabel{remmaintheolower}
The latter proof shows that Theorem \ref{maintheolower} 
extends to any Lipschitz domain $\om\subset\rN$ of arbitrary dimension
with the appropriate changes for the rotation operator.
\end{rem}

Combining Theorems \ref{maintheo} and \ref{maintheolower} we obtain:

\begin{theo}
\mylabel{maintheolowerupper}
Let $\om$ be convex. Then
$$\frac{\cpc}{\epsh^3}\leq\cmteps\leq\epsh\cp,\quad
\frac{\cpc}{\epsh^3}<\frac{\cp}{\epsh^3}\leq\cmneps\leq\epsh\cp$$
and hence
$$\frac{\cpc}{\epsh^3}\leq\cmteps,\cmneps\leq\epsh\cp\leq\epsh\diam(\om)/\pi.$$
If additionally $\eps=\id$, then
$$\cpc\leq\cmt\leq\cmn=\cp\leq\diam(\om)/\pi.$$
\end{theo}

\begin{rem}
\mylabel{polyhedra}
Our results extend also to all possibly non-convex polyhedra which allow the 
$\hoom$-regularity of the Maxwell spaces $\rcom\cap\dom$ and $\rom\cap\dcom$
or to domains whose boundaries consist of combinations of convex boundary parts 
and polygonal parts which allow the $\hoom$-regularity.
Is is shown in \cite[Theorem 4.1]{costabelcoercbilinMax} 
that \eqref{frenchformula}, even \eqref{frenchformulaequal}, 
still holds for all $E\in\hoom\cap\rcom$ or $E\in\hoom\cap\dcom$ 
if $\om$ is a polyhedron\footnote{The crucial point is 
that the unit normal is piecewise constant and hence the curvature is zero.}.
We note that even some non-convex polyhedra admit the $\hoom$-regularity of the Maxwell spaces
depending on the angle of the corners, which are not allowed to by too pointy.
\end{rem}

\begin{rem}
\mylabel{eigenvalues}
\begin{itemize}
\item[\bf(i)] 
We conjecture $\cpc<\cmt<\cmn=\cp$ for convex $\om\subset\rt$.
\item[\bf(ii)] 
We note that by Theorem \ref{maintheolowerupper} we have given a new proof of the estimate 
$$0<\mu_{2}\leq\lambda_{1}$$
for convex $\om\subset\rt$. Moreover, the absolute values of the eigenvalues 
of the different Maxwell operators (tangential or normal boundary condition)
lie between $\sqrt{\mu_{2}}$ and $\sqrt{\lambda_{1}}$.
\end{itemize}
\end{rem}

Finally, we note that in the case $\eps=\id$ we can find some different proofs
for the lower bounds in less general settings. For example, 
if $\om$ has a connected boundary, then $\harmdom=\{0\}$ and hence
\begin{align*}
\frac{1}{\cmt^2}
&=\inf_{0\neq E\in\rcom\cap\dom}\frac{\normosom{\rot E}^2+\normosom{\div E}^2}{\normosom{E}^2}\\
&\leq\inf_{0\neq E\in\hocom}\frac{\normosom{\rot E}^2+\normosom{\div E}^2}{\normosom{E}^2}
=\inf_{0\neq E\in\hocom}\frac{\normosom{\na E}^2}{\normosom{E}^2}
=\frac{1}{\cpc^2}
\end{align*}
giving $\cpc\leq\cmt$.
If $\om$ is simply connected, then $\harmnom=\{0\}$ and hence
\begin{align*}
\frac{1}{\cmn^2}
&=\inf_{0\neq H\in\rom\cap\dcom}\frac{\normosom{\rot H}^2+\normosom{\div H}^2}{\normosom{H}^2}\\
&\leq\inf_{0\neq H\in\hocom}\frac{\normosom{\rot H}^2+\normosom{\div H}^2}{\normosom{H}^2}
=\inf_{0\neq H\in\hocom}\frac{\normosom{\na H}^2}{\normosom{H}^2}
=\frac{1}{\cpc^2}
\end{align*}
yielding $\cpc\leq\cmn$.
Another proof could be like this: Again, we assume that $\ga$ is connected for the tangential case
resp.~that $\om$ is simply connected for the normal case.
Let $u\in\hocom$ and $\xi\in\rt$ with $|\xi|=1$. Then $E:=u\xi\in\hocom\subset\rcom\cap\dcom$
and since there are no Dirichlet resp.~Neumann fields, we get by \eqref{maxestelec} resp.~\eqref{maxestmag} 
and $\rot E=\na u\times\xi$, $\div E=\na u\cdot\xi$
$$\normosom{u}^2
=\normosom{E}^2
\leq\cm^2\big(\normosom{\rot E}^2+\normosom{\div E}^2\big)
=\cm^2\normosom{\na u}^2.$$
Therefore $\cpc\leq\cm$, where $\cm=\cmt$ resp.~$\cm=\cmn$.

\begin{acknow}
The author is deeply indebted to Sergey Repin not only for bringing his attention
to the problem of the Maxwell constants in 3D.
Moreover, the author wants to thank Sebastian Bauer und Karl-Josef Witsch 
for long term fruitful and deep discussions.
Finally, the author thanks the anonymous referee for careful reading and valuable suggestions,
especially concerning the lower bounds.
\end{acknow}

\bibliographystyle{plain} 
\bibliography{/Users/paule/Library/texmf/tex/TeXinput/bibtex/paule}

\appendix
\section{Appendix: The Maxwell Estimates in Two Dimensions}

Finally, we want to note that similar but simpler results hold in two dimensions as well.
More precisely, for $N=2$ the Maxwell constants can be estimated by the Poincar\'e constants
in any bounded Lipschitz domain $\om\subset\rz^2$.
Although this is quite well known, we present the results for convenience and completeness.

As noted before, Lemma \ref{lemNarbdiv} holds in any dimension.
In two dimensions the rotation $\rot$ 
differs from the divergence $\div$ just by a $90^{\circ}$-rotation $R$ given by
$$R:=\zmat{0}{1}{-1}{0},\quad R^2=-\id,\quad R^\top=-R=R^{-1}.$$
The same holds for the co-gradient $\lhd:=\rot^{*}$
(as formal adjoint) and the gradient $\na$.
More precisely, for smooth functions $u$ and smooth vector fields $v$ we have
\begin{align*}
\rot v&=\div Rv=\p_{1}v_{2}-\p_{2}v_{1},&
\lhd u&=R\na u=\zvec{\p_{2}u}{-\p_{1}u},\\
\div v&=-\rot Rv,&
\na u&=-R\lhd u
\end{align*}
and thus also $-\Delta u=-\div\na u=\div RR\na u=\rot\lhd u$. 
For the vector Laplacian we have $-\Delta v=\lhd\rot-\na\div$.
Furthermore,
$$v\in\rom\qequi Rv\in\dom,\qquad v\in\rcom\qequi Rv\in\dcom.$$
The Helmholtz decompositions read
\begin{align*}
\Ltom
&=\na\hocom\oplus_{\eps}\eps^{-1}\dzom\\
&=\rczom\oplus_{\eps}\eps^{-1}\lhd\hoom\\
&=\na\hocom\oplus_{\eps}\harmdepsom\oplus_{\eps}\eps^{-1}\lhd\hoom,\\
\Ltom
&=\na\hoom\oplus_{\eps}\eps^{-1}\dczom\\
&=\rzom\oplus_{\eps}\eps^{-1}\lhd\hocom\\
&=\na\hoom\oplus_{\eps}\harmnepsom\oplus_{\eps}\eps^{-1}\lhd\hocom
\end{align*}
and we note
\begin{align*}
\na\hocom&=\rczom\cap\harmdepsom^{\bot_{\eps}},&
\eps^{-1}\lhd\hoom&=\eps^{-1}\dzom\cap\harmdepsom^{\bot_{\eps}},\\
\na\hoom&=\rzom\cap\harmnepsom^{\bot_{\eps}},&
\eps^{-1}\lhd\hocom&=\eps^{-1}\dczom\cap\harmnepsom^{\bot_{\eps}}.
\end{align*}
We also need the matrix $\eps_{R}:=-R\eps R$, 
which fulfills the same estimates as $\eps$, i.e., 
for all $E\in\Ltom$
$$\epsu^{-2}\normosom{E}^2\leq\scpsom{\eps_{R}E}{E}\leq\epso^2\normosom{E}^2,$$
since $\scpsom{\eps_{R}E}{E}=\scpsom{\eps RE}{RE}$ and $\normosom{RE}=\normosom{E}$.
But then the inverse $\eps_{R}^{-1}$ satisfies for all $E\in\Ltom$
$$\epso^{-2}\normosom{E}^2\leq\scpsom{\eps_{R}^{-1}E}{E}\leq\epsu^2\normosom{E}^2,$$
which immediately follows by \eqref{epsuotwo}, i.e.,
$$\scpsom{\eps_{R}^{-1}E}{E}=\normosom{\eps_{R}^{-1/2}E}^2
\begin{cases}
\leq\epsu^2\scpsom{\eps_{R}\eps_{R}^{-1/2}E}{\eps_{R}^{-1/2}E}=\epsu^2\normosom{E}^2\\
\geq\epso^{-2}\scpsom{\eps_{R}\eps_{R}^{-1/2}E}{\eps_{R}^{-1/2}E}=\epso^{-2}\normosom{E}^2
\end{cases}.$$
Hence, for the inverse matrix $\eps_{R}^{-1}=-R\eps^{-1}R$ 
simply $\epsu$ and $\epso$ has to be exchanged.
Furthermore, we have $\eps_{R}^{\pm1/2}=-R\eps^{\pm1/2}R$.

For the solenoidal fields we have the following:

\begin{lem}
\mylabel{lemNTworot}
For all $E\in\rcom\cap\eps^{-1}\lhd\hoom$
and all $H\in\rom\cap\eps^{-1}\lhd\hocom$
$$\normosepsom{E}\leq\epso\cp\normosom{\rot E},\quad
\normosepsom{H}\leq\epso\cpc\normosom{\rot H}.$$
\end{lem}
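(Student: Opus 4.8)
The plan is to reduce Lemma \ref{lemNTworot} to the already established Lemma \ref{lemNarbdiv} by means of the $90^{\circ}$-rotation $R$, as announced in Remark \ref{lemNThreerottwod}. Throughout I will use the identities collected above, in particular $\rot v=\div Rv$, the equivalences $v\in\rcom\Leftrightarrow Rv\in\dcom$ and $v\in\rom\Leftrightarrow Rv\in\dom$, and the rotated weight $\eps_{R}=-R\eps R$; recall that $\eps_{R}$ obeys the same bounds \eqref{epsuoone} as $\eps$, whereas $\eps_{R}^{-1}=-R\eps^{-1}R$ obeys the bounds of $\eps^{-1}$, so that the lower constant of $\eps_{R}^{-1}$ in \eqref{epsuoone} equals $\epso$.

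First I would treat $E\in\rcom\cap\eps^{-1}\lhd\hoom$. Put $F:=RE$, so that $F\in\dcom$. Since $\eps E\in\lhd\hoom=R\na\hoom$, choose $u\in\hoom$ with $\eps E=R\na u$; then, using $R^{2}=-\id$, the short computation $\eps_{R}F=\eps_{R}RE=-R\eps R^{2}E=R\eps E=R^{2}\na u=-\na u$ shows that $\eps_{R}F=\na(-u)\in\na\hoom$, and since $\eps_{R}^{-1}(\eps_{R}F)=F\in\dcom$ we obtain $\eps_{R}F\in\na\hoom\cap\eps_{R}\dcom$. Now the second estimate of Lemma \ref{lemNarbdiv}, applied with $\eps_{R}^{-1}$ in place of $\eps$ and $\eps_{R}F$ in place of $H$, gives $\scpsom{F}{\eps_{R}F}^{1/2}=\scpsom{\eps_{R}^{-1}(\eps_{R}F)}{\eps_{R}F}^{1/2}\le\epso\cp\normosom{\div F}$. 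It only remains to undo the rotation: $\scpsom{\eps_{R}F}{F}=\scpsom{R\eps E}{RE}=\scpsom{\eps E}{E}=\normosepsom{E}^{2}$ since $R^{\top}R=\id$, and $\div F=\div RE=\rot E$. Hence $\normosepsom{E}\le\epso\cp\normosom{\rot E}$.

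The second estimate is obtained in exactly the same manner, replacing $\hoom$ by $\hocom$ and $\dcom$ by $\dom$: for $H\in\rom\cap\eps^{-1}\lhd\hocom$ one sets $F:=RH\in\dom$, writes $\eps H=R\na u$ with $u\in\hocom$, gets $\eps_{R}F=-\na u\in\na\hocom$, hence $\eps_{R}F\in\na\hocom\cap\eps_{R}\dom$, and applies the \emph{first} estimate of Lemma \ref{lemNarbdiv} (again with $\eps_{R}^{-1}$ in place of $\eps$), which is why the relevant Poincar\'e constant is now $\cpc$; this yields $\normosepsom{H}\le\epso\cpc\normosom{\rot H}$.

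I expect no genuine obstacle beyond careful bookkeeping of the weight under $R$: since Lemma \ref{lemNarbdiv} is invoked for the field $\eps_{R}F$ carrying the weight $\eps_{R}^{-1}$, the constant from \eqref{epsuoone} that enters is the lower bound of $\eps^{-1}$, which is $\epso$ and not $\epsu$ — this is exactly what forces the extra factor $\epso$ — together with the sign juggling from $R^{2}=-\id$. As an alternative one may avoid $\eps_{R}$ altogether and simply repeat the two-line argument of Lemma \ref{lemNarbdiv} in rotated form: writing $\eps E=\lhd u=R\na u$ with $u\bot\rz$, one integrates by parts against $RE\in\dcom$ using $\div RE=\rot E$ and combines $\normosom{u}\le\cp\normosom{\na u}$ from \eqref{poincareho} with $\normosom{\na u}=\normosom{R\na u}=\normosom{\eps E}\le\epso\normosepsom{E}$ from \eqref{epsuotwo}, and likewise for $H$ with $u\in\hocom$, $RH\in\dom$ and \eqref{poincarehoc}.
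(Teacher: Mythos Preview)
Your proof is correct and follows essentially the same route as the paper: you set $F=RE$ (resp.\ $F=RH$), identify $\eps_{R}F=R\eps E\in\na\hoom\cap\eps_{R}\dcom$ (resp.\ $\in\na\hocom\cap\eps_{R}\dom$), apply Lemma \ref{lemNarbdiv} with the weight $\eps_{R}^{-1}$ so that the relevant constant becomes $\epso$, and then unwind the rotation via $\scpsom{\eps_{R}F}{F}=\normosepsom{E}^{2}$ and $\div F=\rot E$. The only cosmetic difference is that the paper works directly with $R\eps E$ rather than naming $F=RE$, and computes the norm identity through $\eps_{R}^{-1/2}=-R\eps^{-1/2}R$; your alternative direct argument at the end is also fine.
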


\begin{proof}
Since $RE\in\dcom$ and $R\eps E\in\na\hoom$
we have $R\eps E\in\na\hoom\cap\eps_{R}\dcom$.
By Lemma \ref{lemNarbdiv} (interchanging $\epsu$ and $\epso$) we get
\begin{align*}
\normosepsom{E}
&=\normosom{\eps^{1/2}E}
=\normosom{R\eps^{-1/2}\eps E}
=\normosom{\eps_{R}^{-1/2}R\eps E}
=\normo{R\eps E}_{\om,\eps_{R}^{-1}}\\
&\leq\epso\cp\normosom{\div\eps_{R}^{-1}R\eps E}
=\epso\cp\normosom{\rot E}.
\end{align*}
Analogously, as $RH\in\dom$ and $R\eps H\in\na\hocom$
we have $R\eps H\in\na\hocom\cap\eps_{R}\dom$.
Again by Lemma \ref{lemNarbdiv} (and again interchanging $\epsu$ and $\epso$) we get
\begin{align*}
\normosepsom{H}
&=\normosom{\eps^{1/2}H}
=\normosom{R\eps^{-1/2}\eps H}
=\normosom{\eps_{R}^{-1/2}R\eps H}
=\normo{R\eps H}_{\om,\eps_{R}^{-1}}\\
&\leq\epso\cpc\normosom{\div\eps_{R}^{-1}R\eps H}
=\epso\cpc\normosom{\rot H},
\end{align*}
which completes the proof.
\end{proof}

Finally, the main result is proved as
Theorems \ref{maintheo}, \ref{maintheolower} and \ref{maintheolowerupper},
but taking into account that there are now possibly Dirichlet and Neumann fields.

\begin{theo}
\mylabel{theoNTwo}
For all $E\in\rcom\cap\eps^{-1}\dom$ and all
$H\in\rom\cap\eps^{-1}\dcom$
\begin{align*}
\normosepsom{E-\pid E}^2
&\leq\epsu^2\cpc^2\normosom{\div\eps E}^2+\epso^2\cp^2\normosom{\rot E}^2,\\
\normosepsom{H-\pin H}^2
&\leq\epsu^2\cp^2\normosom{\div\eps H}^2+\epso^2\cpc^2\normosom{\rot H}^2.
\end{align*}
Thus 
$$\frac{\cpc}{\epsu\epso^2}\leq\cmteps\leq\max\{\epsu\cpc,\epso\cp\},\quad
\frac{\cpc}{\epsu\epso^2}<\frac{\cp}{\epsu\epso^2}\leq\cmneps\leq\max\{\epsu\cp,\epso\cpc\}$$
and hence
$$\frac{\cpc}{\epsu\epso^2}\leq\cmteps,\cmneps\leq\epsh\cp.$$
For $\eps=\id$ it holds 
$$\cpc\leq\cmt\leq\cmn=\cp$$
and if additionally $\om$ is convex we have $\cp\leq\diam(\om)/\pi$.
\end{theo}

\begin{proof}
Using the Helmholtz decomposition we have
$$\rcom\cap\eps^{-1}\dom\cap\harmdepsom^{\bot_{\eps}}\ni
E-\pid E=E_{\na}+E_{\lhd}\in\na\hocom\oplus_{\eps}\eps^{-1}\lhd\hoom$$
with $E_{\na}\in\na\hocom\cap\eps^{-1}\dom$ 
and $E_{\lhd}\in\rcom\cap\eps^{-1}\lhd\hoom$ as well as
$$\div\eps E_{\na}=\div\eps E,\quad
\rot E_{\lhd}=\rot E.$$
Thus, by Lemma \ref{lemNarbdiv} and Lemma \ref{lemNTworot} as well as orthogonality we obtain
$$\normosepsom{E-\pid E}^2
=\normosepsom{E_{\na}}^2+\normosepsom{E_{\lhd}}^2
\leq\epsu^2\cpc^2\normosom{\div\eps E}^2+\epso^2\cp^2\normosom{\rot E}^2.$$
Analogously, we decompose
$$\rom\cap\eps^{-1}\dcom\cap\harmnepsom^{\bot_{\eps}}\ni
H-\pin H=H_{\na}+H_{\lhd}\in\na\hoom\oplus_{\eps}\eps^{-1}\lhd\hocom$$
with $H_{\na}\in\na\hoom\cap\eps^{-1}\dcom$ 
and $H_{\lhd}\in\rom\cap\eps^{-1}\lhd\hocom$ as well as
$$\div\eps H_{\na}=\div\eps H,\quad
\rot H_{\lhd}=\rot H.$$
As before, by Lemma \ref{lemNarbdiv}, Lemma \ref{lemNTworot} and orthogonality 
we see
$$\normosepsom{H-\pin H}^2
=\normosepsom{H_{\na}}^2+\normosepsom{H_{\lhd}}^2
\leq\epsu^2\cp^2\normosom{\div\eps H}^2+\epso^2\cpc^2\normosom{\rot H},$$
yielding the assertion for the upper bounds.
For the lower bounds we refer to Remark \ref{remmaintheolower}, which completes the proof.
\end{proof}

\end{document}